\documentclass[10pt,reqno]{amsart}
\usepackage{amsmath}
\usepackage{amssymb}
\usepackage{amsthm}


\textheight 20 true cm \textwidth 13 true cm \voffset 1.2 true cm
\hoffset -0.5 true cm \marginparwidth 2 true cm
\parindent 0.5 true cm


\newlength{\defbaselineskip}
\setlength{\defbaselineskip}{\baselineskip}
\newcommand{\setlinespacing}[1]%
           {\setlength{\baselineskip}{#1 \defbaselineskip}}

\numberwithin{equation}{section}

\newtheorem{thm}{Theorem}[section]

\newtheorem{lem}[thm]{Lemma}
\newtheorem{prop}[thm]{Proposition}

\theoremstyle{definition}

\theoremstyle{remark}
\newtheorem{rem}[thm]{Remark}
\numberwithin{equation}{section}


\begin{document}
\title[Local-in-time Strichartz estimates]{On local-in-time Strichartz estimates for the Schr\"odinger  equation with singular potentials}

\author{Seongyeon Kim, Ihyeok Seo and Jihyeon Seok}

\thanks{This research was supported by NRF-2019R1F1A1061316.}

\subjclass[2010]{Primary: 35B45; Secondary: 35Q41}
\keywords{Strichartz estimates, Schr\"odinger equation}

\address{Department of Mathematics, Sungkyunkwan University, Suwon 16419, Republic of Korea}

\email{synkim@skku.edu}

\email{ihseo@skku.edu}

\email{jseok@skku.edu}

\begin{abstract}
There have been a lot of works concerning the Strichartz estimates for the perturbed Schr\"odinger equation by potential.
These can be basically carried out adopting the well-known procedure for obtaining the Strichartz estimates
from the weighted $L^2$ resolvent estimates for the Laplacian.
In this paper we handle the Strichartz estimates without relying on the resolvent estimates.
This enables us to consider various potential classes such as the Morrey-Campanato classes.
\end{abstract}

\maketitle

\section{Introduction}
In this paper we are concerned with Strichartz estimates for the Schr\"odinger equation with perturbations by potential,
\begin{equation}\label{SE}
\begin{cases}
i\partial_{t} u+\Delta u+V(x)u = 0,\\
u(x,0)=u_0(x),
\end{cases}
\end{equation}
where $u:\mathbb{R}^n\times\mathbb{R}\rightarrow\mathbb{C}$ and $V:\mathbb{R}^n\rightarrow\mathbb{C}$.
The Strichartz estimates for  \eqref{SE}
have been extensively studied over the past several decades.

In the free case $V=0$, it was first established by Strichartz \cite{Str} in connection with the Tomas-Stein restriction theorem (\cite{T,St})
in harmonic analysis:
\begin{equation*}
\|u\|_{L^{\frac{2(n+2)}{n}}(\mathbb{R}^{n+1})} \lesssim \|u_0\|_{L^2}.
\end{equation*}
Since then, there have been developments in extending this estimate to mixed norms $L_t^qL_x^r$
(see \cite{GV,KT,M-S}):
\begin{equation}\label{two-s}
\|u\|_{L_t^q(\mathbb{R}; L_x^r(\mathbb{R}^n))} \lesssim \|u_0\|_{L^2}
\end{equation}
if and only if $(q,r)$ is Schr\"odinger-admissible; $q\ge 2$, $2/q +n/r =n/2$ and $(q,r,n) \neq (2, \infty, 2)$.

For the perturbed \eqref{SE}, the decay $|V|\sim|x|^{-2}$ has been known to be critical
for which the Strichartz estimate \eqref{two-s} holds (see \cite{GVV,D}),
and recent studies have intensively aimed to get as close as possible to the inverse-square potential.
Rodnianski and Schlag \cite{RS} obtained \eqref{two-s} for non-endpoint Schr\"odinger-admissible pairs ($q>2$)
with almost critical decay $|V|\lesssim(1+|x|)^{-2-\varepsilon}$.
In \cite{BPST-Z,BPST-Z2}, this decay assumption is weaken to the critical $|x|^{-2}$ including the endpoint case $q=2$.
For the non-endpoint case $q> 2$, these results are recently generalized in \cite{BM} to weak $L^{n/2}$ potentials which include $|x|^{-2}$,
and generalized earlier in \cite{KK} to more larger Morrey-Campanato classes $\mathcal{L}^{2,p}(\mathbb{R}^n)$ for $p>(n-1)/2$, $n\geq3$.
In general, the Morrey-Campanato class $\mathcal{L}^{\alpha,p}(\mathbb{R}^n)$ is defined for $\alpha>0$ and $1\leq p\leq n/\alpha$ by
$$\|V\|_{\mathcal{L}^{\alpha,p}}:=
\sup_{x\in\mathbb{R}^n, r>0} r^{\alpha} \bigg( r^{-n}\int_{B(x,r)} |V(y)|^p dy\bigg)^{1/p} < \infty,$$
where $B(x,r)$ denotes a ball in $\mathbb{R}^n$ centered at $x$ with radius $r$.

Such generalizations with small potentials can be carried out adopting the well-known procedure (\textit{cf}. \cite{BPST-Z2,BM})
for obtaining the Strichartz estimates
from the weighted $L^2$ resolvent estimates of the form
\begin{equation}\label{resol}
\|(-\Delta-z)^{-1}f\|_{L^2(w(x))}\lesssim C(w)\|f\|_{L^2(w(x)^{-1})}
\end{equation}
where $z\in\mathbb{C}\setminus[0,\infty)$
and a weight $w:\mathbb{R}^{n}\rightarrow[0,\infty]$ is a locally integrable function
which is allowed to be zero or infinite only on a set of Lebesgue measure zero,
so $w^{-1}$ is also a weight if it is locally integrable.
In particular, we can obtain
\begin{equation}\label{3d}
\|u\|_{L_t^q(\mathbb{R}; L_x^r(\mathbb{R}^3))} \lesssim \|u_0\|_{L^2}
\end{equation}
for small potentials $V\in\mathcal{KS}_2(\mathbb{R}^3)$ by making use of \eqref{resol} obtained in \cite{BBRV} with $C(w)\sim\|w\|_{\mathcal{KS}_2(\mathbb{R}^3)}$.
In general, the Kerman-Sawyer class $\mathcal{KS}_{\alpha}(\mathbb{R}^n)$ is defined for $0<\alpha<n$ by
$$ \|V\|_{\mathcal{KS}_{\alpha}}:=\sup_Q \left( \int_Q |V(x)| dx \right)^{-1} \int_Q \int_Q \frac{|V(x)||V(y)|}{|x-y|^{n-\alpha}}  dxdy <\infty,$$
where the sup is taken over all dyadic cubes\footnote{Here the sup is taken over all cubes in $\mathbb{R}^n$ of sidelength $2^{-k}$
	and corners in the set $2^{-k}\mathbb{Z}^n$ for all integers $k$.} $Q$ in $\mathbb{R}^n$.
This class is closely related to the global Kato ($\mathcal{K}$) and Rollnik ($\mathcal{R}$) classes
which are fundamental in spectral and scattering theory (\textit{cf}. \cite{K, Si}) and was revealed in \cite{RS} for their usefulness for dispersive properties of the Schr\"odinger equation. They are defined by
\begin{equation*}
V \in \mathcal{K}\quad \Leftrightarrow \quad \|V\|_{\mathcal{K}} := \sup_{x \in \mathbb{R}^n} \int_{\mathbb{R}^n} \frac{|V(y)|}{|x-y|^{n-2}} dy <\infty
\end{equation*}
and
\begin{equation*}
V \in \mathcal{R} \quad \Leftrightarrow \quad \|V\|_\mathcal{R} = \left( \int_{\mathbb{R}^3}\int_{\mathbb{R}^3} \frac{|V(x)V(y)|}{|x-y|^2}dxdy \right)^{1/2} < \infty.
\end{equation*}
In particular, we note that $\mathcal{K},\mathcal{R} \subset \mathcal{KS}_2$ and $\mathcal{L}^{\alpha,p}\subset\mathcal{KS}_{\alpha}$ for $p>1$.

Motivated by these inclusions, we desire to obtain \eqref{3d} for general dimensions.
However, the method in \cite{BBRV} is available only in three dimensions and \eqref{resol} is left unsolved
when $w\in\mathcal{KS}_2(\mathbb{R}^n)$ for $n\neq3$.
In this regard, we aim to obtain a higher dimensional version of \eqref{3d} without relying on the resolvent estimates.
We instead make use of weighted $L_t^qL_x^2$ estimates with $q>2$ for the Schr\"odinger flow $e^{it\Delta}$.
In addition to the Strichartz estimates, we also show that the Cauchy problem \eqref{SE} can be locally well-posed
under the potentials considered here.
Our result is the following theorem.

\begin{thm}\label{SS}
	Let $n \ge 3$, $ n-1< \alpha < n$ and $2<q<a<\infty$.
	Assume that $\|V^{\frac{\alpha q(a-2)}{4(a-q)}}\|_{\mathcal{KS}_\alpha}^{\frac{4(a-q)}{\alpha q(a-2)}}$
is finite.\footnote{Hereafter, we shall take fractional powers of $V$ without putting any absolute value
since $|V^\lambda|=|V|^\lambda$ for $V\in\mathbb{C}$ and $\lambda\in\mathbb{R}$.}
	For any $T>0$ there exists then a unique solution $u \in  C_t([0,T]; L_x^2(\mathbb{R}^n)) \,\cap\, L_t^q([0,T]; L_x^2(|V|))$ to \eqref{SE}
	with $u_0 \in L^2$.
Furthermore,
	\begin{equation}\label{ss11}
	\sup_{t\in[0,T]}\|u(x,t)\|_{L_x^2}+\|u(x,t)\|_{L_t^q([0,T]; L_x^2(|V|))}\lesssim\|u_0\|_{L^2},
	\end{equation}
and if\, $n/r = n/2-2/q$
	\begin{equation}\label{ss}
	\|u\|_{L_t^q([0,T]; L_x^r(\mathbb{R}^n))} \lesssim \|u_0\|_{L^2}.
	\end{equation}
\end{thm}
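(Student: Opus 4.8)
The plan is to treat \eqref{SE} as a perturbation of the free flow and to reduce the entire theorem to a single weighted estimate for $e^{it\Delta}$, solving the equation by a contraction argument on Duhamel's formula. Writing
\begin{equation*}
\Phi(u)(t)=e^{it\Delta}u_0+i\int_0^t e^{i(t-s)\Delta}V(x)u(s)\,ds,
\end{equation*}
I would look for a fixed point of $\Phi$ in $X_T:=C_t([0,T];L_x^2)\cap L_t^q([0,T];L_x^2(|V|))$, normed by the left-hand side of \eqref{ss11}. The algebraic fact that makes the weight $|V|$ the natural one is the isometry $\|Vg\|_{L_x^2(|V|^{-1})}=\|g\|_{L_x^2(|V|)}$, so that the source term $Vu$ of the Duhamel integral is measured in precisely the dual of the space controlling $u$.

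Everything then rests on the homogeneous weighted estimate
\begin{equation}\label{hom}
\|e^{it\Delta}f\|_{L_t^q([0,T];L_x^2(|V|))}\lesssim \|V^{\beta}\|_{\mathcal{KS}_\alpha}^{\frac{1}{2\beta}}\,T^{\sigma}\,\|f\|_{L^2},\qquad \beta=\tfrac{\alpha q(a-2)}{4(a-q)},
\end{equation}
for some $\sigma>0$, together with its dual. By the $TT^\ast$ method the square of \eqref{hom} is the boundedness on $L_t^{q'}\to L_t^q$ of the operator with time-kernel $A(t-s)$, where $A(\tau):=|V|^{\frac12}e^{i\tau\Delta}|V|^{\frac12}$ acts on $L_x^2$. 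The crux, and the step I expect to be the main obstacle, is the pointwise-in-$\tau$ bound
\begin{equation}\label{ker}
\|A(\tau)\|_{L_x^2\to L_x^2}\lesssim \|V^{\beta}\|_{\mathcal{KS}_\alpha}^{\frac1\beta}\,|\tau|^{-\frac{\alpha}{2\beta}}.
\end{equation}
The exponent is forced by scaling: if $|V|$ has homogeneity $d=\alpha/\beta$, a dilation argument gives $\|A(\tau)\|\sim|\tau|^{-d/2}$, and one computes $d=\tfrac{4(a-q)}{q(a-2)}$. To prove \eqref{ker} I would exploit the oscillation of the Schr\"odinger kernel $c|\tau|^{-n/2}e^{i|x-y|^2/4\tau}$ (a naive absolute-value bound only sees $V\in L^1$ and is useless); after using the oscillation the relevant bilinear form is dominated by the Riesz-potential quadratic expression $\iint |V(x)||V(y)|\,|x-y|^{-(n-\alpha)}\,dx\,dy$, which is exactly what $\|\cdot\|_{\mathcal{KS}_\alpha}$ controls, and the range $n-1<\alpha<n$ should be precisely the window in which this oscillatory reduction converges.

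Granting \eqref{ker}, the kernel $\|A(\tau)\|\lesssim N|\tau|^{-d/2}$ with $N:=\|V^{\beta}\|_{\mathcal{KS}_\alpha}^{1/\beta}$ has $|\tau|^{-d/2}\in L^{q/2}([0,T])$ precisely because $d<4/q$, and this last inequality is equivalent to $q>2$; Young's inequality in the time variable on $[0,T]$ then yields \eqref{hom} with $\sigma=\tfrac1q-\tfrac d4=\tfrac{q-2}{q(a-2)}>0$, the $C_tL_x^2$ component being handled the same way via the endpoint of Young's inequality (and mass conservation for the free part). The retarded form of the Duhamel operator is recovered from the non-retarded one by the Christ--Kiselev lemma, legitimate exactly because $q'<q$, a second manifestation of $q>2$. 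Feeding $F=Vu$ into the resulting inhomogeneous estimate and using the isometry above together with H\"older in time $\|u\|_{L_t^{q'}}\le T^{1/q'-1/q}\|u\|_{L_t^q}$, I find the Duhamel part of $\Phi$ contracts on $X_T$ with operator norm $\lesssim N\,T^{\frac{a(q-2)}{q(a-2)}}$, hence $<1$ once $T\le T_0(V)$. The contraction principle produces the unique solution and \eqref{ss11} on $[0,T_0]$; since the problem is linear, partitioning an arbitrary $[0,T]$ into finitely many intervals of length $T_0$ and propagating the $L^2$ bound across the joints extends existence, uniqueness and \eqref{ss11} to all $T>0$.

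Finally \eqref{ss} follows from the solution already constructed. In the Duhamel representation the free term is controlled by the standard non-endpoint (here $q>2$) unweighted Strichartz estimate $\|e^{it\Delta}u_0\|_{L_t^qL_x^r}\lesssim\|u_0\|_{L^2}$ at the admissible pair $(q,r)$. For the integral term I would pair this same unweighted estimate in the output variable with the dual of \eqref{hom} in the input variable, which gives
\begin{equation*}
\Big\|\int_0^t e^{i(t-s)\Delta}F\,ds\Big\|_{L_t^q([0,T];L_x^r)}\lesssim \|F\|_{L_t^{q'}([0,T];L_x^2(|V|^{-1}))};
\end{equation*}
taking $F=Vu$ and invoking once more the isometry $\|Vu\|_{L_x^2(|V|^{-1})}=\|u\|_{L_x^2(|V|)}$ bounds the right-hand side by $\|u\|_{L_t^qL_x^2(|V|)}$, which is already under control by \eqref{ss11}. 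Thus the only genuinely hard analytic input is the weighted oscillatory kernel bound \eqref{ker}; the roles of the three exponents are transparent in this scheme, with $q>2$ governing both the Christ--Kiselev step and the time integrability $d<4/q$, $a>q$ making the weight a genuine decaying one ($d>0$), and $a<\infty$ producing the strictly positive power of $T$ that drives the contraction.
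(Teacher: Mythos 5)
Your architecture --- Duhamel's formula, contraction in $C_t L^2_x \cap L^q_t L^2_x(|V|)$, the isometry $\|Vg\|_{L^2(|V|^{-1})}=\|g\|_{L^2(|V|)}$, Christ--Kiselev to pass to the retarded operator (legitimate since $q'<q$), and the proof of \eqref{ss} by composing the unweighted Strichartz estimate with the dual weighted estimate --- coincides with the paper's, and your exponent bookkeeping is correct: with $d=\frac{4(a-q)}{q(a-2)}$ one indeed has $d<4/q\iff q>2$ and $\frac1q-\frac d4=\frac{q-2}{q(a-2)}$, matching \eqref{shomo}. The genuine gap is exactly where you place "the only genuinely hard analytic input": the fixed-time kernel bound $\|\,|V|^{1/2}e^{i\tau\Delta}|V|^{1/2}\|_{L^2\to L^2}\lesssim \|V^{\beta}\|_{\mathcal{KS}_\alpha}^{1/\beta}|\tau|^{-\alpha/(2\beta)}$ is never proved, and the sketch you give for it cannot work as stated. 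At a \emph{fixed} time $\tau$ there is no mechanism that produces the Riesz kernel $|x-y|^{-(n-\alpha)}$: that kernel appears only after integration in $\tau$ (this is how the resolvent and the $L^2_t$-smoothing estimates see $\mathcal{KS}_\alpha$), whereas the modulus of the fixed-time kernel is identically $(4\pi|\tau|)^{-n/2}$, so "dominating the bilinear form by the Riesz-potential quadratic expression" is not an available step. Worse, the expression you propose to land on, $\iint |V(x)||V(y)|\,|x-y|^{-(n-\alpha)}dxdy$, is the $\mathcal{KS}_\alpha$-form of $V$ itself, while the hypothesis controls $\|V^\beta\|_{\mathcal{KS}_\alpha}$ with $\beta=\alpha/d>1$; these are genuinely different conditions (the former tolerates local singularities $\sim|x|^{-\alpha}$, the latter only $\sim|x|^{-d}$ with $d<2$), so the target of your reduction is the wrong functional of $V$. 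Your kernel bound is scaling-consistent and does hold for the model $V=|x|^{-d}$ --- factoring the phase $e^{i|x-y|^2/4\tau}$ it reduces to Pitt's inequality $\||\xi|^{-d/2}\hat h\|_2\lesssim \||x|^{d/2}h\|_2$ --- but for the general class it is a two-weight Fourier-transform inequality that is not in the cited literature and is strictly stronger than anything the theorem needs.

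The paper sidesteps this entirely: the weighted homogeneous estimate \eqref{shomo} is obtained not from any pointwise-in-time bound but by complex interpolation with change of weights (Lemma \ref{rint}) between the trivial energy estimate \eqref{energy} and the time-global weighted smoothing estimate \eqref{swl2} of \cite{S2}, which is precisely where the restriction $n-1<\alpha<n$, $n\ge3$ enters; the fractional power $w^{\theta}$, and hence the quantity $\|V^{\frac{\alpha q(a-2)}{4(a-q)}}\|_{\mathcal{KS}_\alpha}$, fall out of interpolating the weighted $L^2$ spaces. If you replace your kernel bound by this interpolation step, everything downstream in your write-up (the $TT^*$/Christ--Kiselev passage, the contraction with operator norm $\lesssim N T^{\frac{a(q-2)}{q(a-2)}}$, the iteration in $T$, and the deduction of \eqref{ss}) is essentially the paper's proof and is sound.
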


\begin{rem}
	The quantity $\|V^{\beta}\|_{\mathcal{KS}_{\alpha}}^{1/\beta}$ already appeared in \cite{S,S2,LS,C} concerning various problems for the Schr\"odinger equation as well as the Schr\"odinger operator.
	From the inclusion, if $p\neq1$, we also see that
	$$\|V^{\frac{\alpha q(a-2)}{4(a-q)}}\|_{\mathcal{KS}_\alpha}^{\frac{4(a-q)}{\alpha q(a-2)}} \le \|V^{\frac{\alpha q(a-2)}{4(a-q)}}\|_{\mathcal{L}^{\alpha, p}}^{\frac{4(a-q)}{\alpha q(a-2)}}=\|V\|_{\mathcal{L}^{\frac{4(a-q)}{q(a-2)}, \frac{p\alpha q(a-2)}{4(a-q)}}}.$$
	Hence the theorem includes various new Morrey-Campanato classes $\mathcal{L}^{\gamma,s}$ with $\gamma<2$ and $s>\alpha/\gamma$.
For example, if $V(x)=\phi(\frac{x}{|x|})|x|^{-\gamma}$, $\phi\in L^s(\mathbb{S}^{n-1})$, $\frac{n-1}{\gamma}<s<\frac{n}{\gamma}$,
then $V$ need not belong to $L^{n/\gamma,\infty}$, but $V\in\mathcal{L}^{\gamma,s}$.
	More interestingly, this shows that
	the decay assumption on the potential can be weaken more by $|x|^{-\gamma}$ with $\gamma<2$ than the inverse square potential
	when considering the Strichartz estimates locally in time.
\end{rem}

The theorem also holds for time-dependent potentials $V(x,t)$ with suitable assumptions.
For example, the above assumptions on $V(x)$ replaced by $W(x)=\sup_{t\in[0,T]}|V(x,t)|$
would suffice. Taking the supremum in this way already appeared in the study of local regularity of the solution (\cite{RV,RV2}).
For more general time-dependent potentials,
weighted $L_{x,t}^2$ estimates have been developed by Koh and the second author \cite{KS,KS2},
and therefore the potentials therein would also suffice.

\

Throughout this paper, the letter $C$ stands for a positive constant which may be different
at each occurrence.
We also denote $A\lesssim B$ to mean $A\leq CB$
with unspecified constants $C>0$.

\section{Weighted Strichartz estimates}\label{sec2}

In this section we obtain weighted $L_t^qL_x^2$ Strichartz estimates with $q>2$ for the free Schr\"odinger flow,
which will be used for the proof of Theorem \ref{SS} in the next section:

\begin{prop}\label{sp}
	Let $n \ge 3$ and $ n-1< \alpha < n$. If $2<q<a<\infty$ then we have
	\begin{equation}\label{shomo}
	\|e^{it\Delta}f\|_{L_t^q([0,T]; L_x^2(w))}\lesssim T^{\frac{q-2}{q(a-2)}}
	\|w^{\frac{\alpha q(a-2)}{4(a-q)}}\|_{\mathcal{KS}_{\alpha}}^{\frac{2(a-q)}{\alpha q(a-2)}}\|f\|_{L^2}
	\end{equation}
	and
	\begin{align}\label{sinhomo}
	\nonumber\bigg\|\int_{-\infty}^{t} e^{i(t-s)\Delta}F(\cdot, s) &ds\bigg\|_{L_t^q([0,T]; L_x^2(w))} \\
	\lesssim &\,\, T^{\frac{2(q-2)}{q(a-2)}}\|w^{\frac{\alpha q(a-2)}{4(a-q)}}\|_{\mathcal{KS}_{\alpha}}^{\frac{4(a-q)}{\alpha q(a-2)}} \|F\|_{L_t^{q\prime}([0,T];L_x^2(w^{-1}))}.
	\end{align}
\end{prop}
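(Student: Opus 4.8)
The plan is to prove the homogeneous estimate \eqref{shomo} first and then deduce the inhomogeneous estimate \eqref{sinhomo} from it by duality and the Christ--Kiselev lemma. Writing $A=T^{\frac{q-2}{q(a-2)}}\|w^{\beta}\|_{\mathcal{KS}_{\alpha}}^{1/(2\beta)}$ with $\beta=\frac{\alpha q(a-2)}{4(a-q)}$ for the operator bound in \eqref{shomo}, the adjoint of $f\mapsto e^{it\Delta}f$ maps $L_t^{q'}([0,T];L_x^2(w^{-1}))$ into $L_x^2$ with the same bound $A$, using the duality $(L_x^2(w))^{*}=L_x^2(w^{-1})$ under the unweighted pairing. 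Composing the homogeneous estimate with its adjoint bounds the non-retarded operator $F\mapsto e^{it\Delta}\int_0^T e^{-is\Delta}F(s)\,ds$ on the relevant spaces by $A^{2}$, and since $q'<q$ the Christ--Kiselev lemma upgrades this to the retarded integral $\int_{-\infty}^{t}$. As $A^{2}=T^{\frac{2(q-2)}{q(a-2)}}\|w^{\beta}\|_{\mathcal{KS}_{\alpha}}^{1/\beta}$, this reproduces exactly the powers of $T$ and of the Kerman--Sawyer norm appearing in \eqref{sinhomo}.

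For \eqref{shomo} itself, the first idea is to peel off a power of the weight so that the norm of $w^{\beta}$ is what enters. Setting $\theta=1/\beta$ (one checks $\beta>\frac{\alpha q}{4}>1$ throughout the stated range, so $\theta\in(0,1)$), H\"older's inequality in $x$ gives the pointwise-in-$t$ interpolation
\[
\|e^{it\Delta}f\|_{L_x^2(w)}\le\|e^{it\Delta}f\|_{L_x^2(w^{\beta})}^{\theta}\,\|e^{it\Delta}f\|_{L_x^2}^{1-\theta}.
\]
Since $\|e^{it\Delta}f\|_{L_x^2}=\|f\|_{L^2}$ by mass conservation, taking the $L_t^q([0,T])$ norm and applying H\"older in $t$ reduce matters to a single weighted estimate
\[
\|e^{it\Delta}f\|_{L_t^{q/\beta}([0,T];L_x^2(w^{\beta}))}\lesssim T^{c}\,\|w^{\beta}\|_{\mathcal{KS}_{\alpha}}^{1/2}\|f\|_{L^2},\qquad w^{\beta}\in\mathcal{KS}_{\alpha},
\]
after which raising to the power $\theta$ and multiplying by $\|f\|_{L^2}^{1-\theta}$ recovers \eqref{shomo}. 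The bookkeeping forces $c=\frac{\alpha}{4}\cdot\frac{q-2}{a-q}$, so that $c\theta=\frac{q-2}{q(a-2)}$, matching the required power of $T$ exactly.

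Because $q/\beta=\frac{4(a-q)}{\alpha(a-2)}<\frac{4}{\alpha}$, the displayed estimate follows by one further application of H\"older in time (which supplies precisely the factor $T^{c}$) from the scale-invariant weighted Strichartz estimate
\[
\|e^{it\Delta}f\|_{L_t^{4/\alpha}(\mathbb{R};L_x^2(v))}\lesssim\|v\|_{\mathcal{KS}_{\alpha}}^{1/2}\|f\|_{L^2},\qquad v\in\mathcal{KS}_{\alpha}.
\]
This estimate, which specializes at $\alpha=2$ to the classical $L^2_{x,t}(|x|^{-2})$ smoothing bound, is the heart of the matter and the only place where the restriction $n-1<\alpha<n$ is needed. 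I would establish it without resolvents, viewing $e^{it\Delta}$ as the extension operator for the paraboloid and running a $\mathcal{E}^{*}\mathcal{E}$-type argument: the weighted bound reduces to controlling an operator whose kernel is $\widehat{v}$ evaluated at frequency differences, carried by the oscillation on the paraboloid, and the Kerman--Sawyer condition on $v$ is exactly the trace inequality $\int|(-\Delta)^{-\alpha/4}g|^{2}v\,dx\lesssim\|v\|_{\mathcal{KS}_{\alpha}}\|g\|_{L^2}^2$ that makes this bound quantitative. The lower threshold $\alpha>n-1$ reflects the curvature of the paraboloid, namely the integrability of the Riesz kernel $|\cdot|^{-(n-\alpha)}$ against the $(n-1)$-dimensional spheres $|\xi|=|\eta|$, while $\alpha<n$ keeps $v^{-1}$ a genuine weight; the quantity $\|w^{\beta}\|_{\mathcal{KS}_\alpha}^{1/\beta}$ of this type has already been used in \cite{S,S2}.

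The main obstacle is precisely this scale-invariant weighted estimate. The delicate point is that the fixed-time weighted operator norm $\|e^{i\tau\Delta}\|_{L_x^2(v^{-1})\to L_x^2(v)}$ decays only like $|\tau|^{-\alpha/2}$ with $\alpha/2>1$, so a naive Hardy--Littlewood--Sobolev argument in the time variable is too lossy near $\tau=0$; one must instead keep the full space-time oscillatory structure and exploit cancellation jointly in $x$ and $t$, which is exactly what the extension-operator formulation together with the trace inequality delivers, and where the admissible range $n-1<\alpha<n$ is genuinely used. Once this estimate is in hand, the two H\"older reductions and the $TT^{*}$/Christ--Kiselev passage to \eqref{sinhomo} are routine.
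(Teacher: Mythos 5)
Your handling of \eqref{sinhomo} (duality, $TT^*$, Christ--Kiselev) and all of your exponent bookkeeping are correct, and that part matches the paper. The fatal problem is the ``heart of the matter'' you reduce \eqref{shomo} to: the global, scale-invariant estimate
\begin{equation*}
\|e^{it\Delta}f\|_{L_t^{4/\alpha}(\mathbb{R};L_x^2(v))}\lesssim\|v\|_{\mathcal{KS}_\alpha}^{1/2}\|f\|_{L^2},\qquad v\in\mathcal{KS}_\alpha,
\end{equation*}
is \emph{false} throughout the range you need. Since $n\ge3$ and $\alpha>n-1$, you have $\alpha>2$, hence $4/\alpha<2$, and any $T$-independent mixed-norm bound with time exponent below $2$ is destroyed by stacking time translates. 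Concretely, take $v=|x|^{-\alpha}\in\mathcal{KS}_\alpha$, a Schwartz bump $g$ with $\|g\|_{L^2}=1$ and $g(0)\neq0$, and $f_M=\sum_{j=1}^M e^{-ijK\Delta}g$ with $K$ a large fixed constant. Dispersive decay gives $|\langle g,e^{i\tau\Delta}g\rangle|\lesssim|\tau|^{-n/2}$ and $\|e^{i\tau\Delta}g\|_{L_x^2(|x|^{-\alpha})}\lesssim|\tau|^{-\alpha/2}$; both exponents exceed $1$, so the cross terms are summable and $\|f_M\|_{L^2}\approx M^{1/2}$, while for $t$ in a fixed small neighborhood of each $t_j=jK$ the $j$-th summand dominates and $\|e^{it\Delta}f_M\|_{L_x^2(|x|^{-\alpha})}\gtrsim1$. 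Hence the left-hand side is $\gtrsim M^{\alpha/4}$, and $M^{\alpha/4}\lesssim M^{1/2}$ fails as $M\to\infty$ precisely because $\alpha>2$. The same example defeats the weaker local statement you actually feed into H\"older, $\|e^{it\Delta}f\|_{L_t^{q/\beta}([0,T];L_x^2(w^\beta))}\lesssim T^{c}\|w^\beta\|_{\mathcal{KS}_\alpha}^{1/2}\|f\|_{L^2}$: with $M\approx T$ bumps one needs $T^{\beta/q}\lesssim T^{c+1/2}$, and $\beta/q-(c+1/2)=(\alpha-2)/4>0$, so it fails for large $T$. Since Proposition \ref{sp} is asserted (and used) for every $T>0$, no repair of this route is possible.

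The structural reason is worth noting, because it is exactly where the paper goes a different way. Peeling the weight by pointwise H\"older in $x$, $\|g\|_{L_x^2(w)}\le\|g\|_{L_x^2(w^\beta)}^{1/\beta}\|g\|_{L_x^2}^{1-1/\beta}$, concentrates the full $\mathcal{KS}_\alpha$ singularity (morally $|x|^{-\alpha}$, i.e.\ worse than inverse-square) onto a time exponent $q/\beta<4/\alpha<2$; the hypothesis $q>2$, which is what protects \eqref{shomo} from the stacking obstruction, is thrown away at the first step. The paper instead interpolates (complex interpolation, Lemma \ref{rint}) between the trivial energy bound \eqref{energy} and the $L_t^2$ weighted smoothing estimate \eqref{swl2} of \cite{S2}, in which it is $w^{\alpha/2}$ --- not $w$ --- that lies in $\mathcal{KS}_\alpha$, so the weight there is only inverse-square-like. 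Interpolation keeps the time exponent in $(2,a)$ and the weight strictly milder than $|x|^{-2}$ simultaneously. Your target estimate corresponds to the extrapolation parameter $\theta=\alpha/2>1$ on that interpolation segment, beyond the admissible endpoint; it is not merely hard to prove by an extension-operator argument, it is false, which is why it cannot be recovered from \eqref{swl2} and \eqref{energy} by any interpolation.
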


\begin{proof}
	To show the first estimate \eqref{shomo}, we first note that
	\begin{equation}\label{energy}
	\|e^{it\Delta}f \|_{L_t^{a}([0,T]; L_x^2)} \le T^{1/a}\|f\|_{L^2}
	\end{equation}
	with $2<a<\infty $ and
	\begin{equation}\label{swl2}
	\big\|e^{it\Delta}f\big\|_{L_t^2([0,T];L_x^2(w))} \lesssim \|w^{\frac{\alpha}{2}}\|_{\mathcal{KS}_{\alpha}}^{\frac{1}{\alpha}} \| f\|_{L^2}
	\end{equation}
	for $n-1<\alpha<n$ with $n\geq3$.
	Estimate \eqref{energy} follows immediately from H\"older's inequality and the fact that $e^{it\Delta}$ is isometry on $L^2$;
	\begin{equation*}
	\|e^{it\Delta}f \|_{L_t^{a}([0,T]; L_x^2)} \le T^{1/a}\|e^{it\Delta}f\|_{L_t^{\infty}L_x^2} \le T^{1/a}\|f\|_{L^2}.
	\end{equation*}
	On the other hand, \eqref{swl2}
	was already obtained by the first author in \cite{S2}.
	We now deduce \eqref{shomo} from applying the complex interpolation with Lemma \ref{rint} below,
	between the two estimates \eqref{energy} and \eqref{swl2}.
	
	\begin{lem}[\cite{BL}]\label{rint}
		Let $0 < \theta<1$ and $1 \le q_0, q_1 < \infty$.
		Given two complex Banach spaces $A_0$ and $A_1$,
		$$(L^{q_0}(A_0), L^{q_1}(A_1))_{[\theta]} = L^q((A_0,A_1)_{[\theta]})$$
		if $1/q=(1-\theta)/q_0 +\theta/q_1$, and if $w=w_0^{1-\theta}w_1^{\theta}$
		$$(L^{2}(w_0), L^2(w_1))_{[\theta]} =L^2(w).$$	
		Here, $(\cdot\,,\cdot)_{[\theta]}$ denotes the complex interpolation functor.
	\end{lem}

	Indeed, using the complex interpolation between \eqref{energy} and \eqref{swl2}, we first see
	\begin{equation*}
	\|e^{it\Delta}f\|_{(L_t^a([0,T];L_x^2),L_t^2([0,T];L_x^2(w)))_{[\theta]}}  \lesssim T^{\frac{1-\theta}a}\|w^{\frac{\alpha}{2}}\|_{\mathcal{KS}_{\alpha}}^{\frac{\theta}{\alpha}}\|f\|_{L^2},
	\end{equation*}
	and then we make use of Lemma \ref{rint} to get
	\begin{equation*}
	\|e^{it\Delta}f\|_{L_t^q([0,T]; L_x^2(w^{\theta}))}\lesssim T^{\frac{1-\theta}a}\|w^{\frac{\alpha}{2}}\|_{\mathcal{KS}_{\alpha}}^{\frac{\theta}{\alpha}}\|f\|_{L^2}
	\end{equation*}
	where
	\begin{equation*}
	\frac{1}{q} = \frac{1-\theta}{a}+\frac{\theta}{2},\quad 2<q<a <\infty, \quad 0<\theta<1,\quad  n-1 <\alpha <n, \quad n \ge 3.
	\end{equation*}
	By substituting $\theta=\frac{2(a-q)}{q(a-2)}$, we have
	\begin{equation*}
	\|e^{it\Delta}f\|_{L_t^q([0,T]; L_x^2(w^{\frac{2(a-q)}{q(a-2)}}))}\lesssim T^{\frac{q-2}{q(a-2)}}\|w^{\frac{\alpha}{2}}\|_{\mathcal{KS}_{\alpha}}^{\frac{2(a-q)}{\alpha q(a-2)}}\|f\|_{L^2}
	\end{equation*}
	under $2<q<a <\infty$, $n-1 <\alpha <n$ and $n \ge 3$. Replacing $w^{\frac{2(a-q)}{q(a-2)}}$ with $w$,
	we obtain the desired estimate \eqref{shomo}.

	Next, we show the estimate \eqref{sinhomo}. By the standard $TT^*$ argument, \eqref{shomo} implies
	$$
	\bigg\|\int_{-\infty}^{\infty} e^{i(t-s)\Delta}F(\cdot, s) ds\bigg\|_{L_t^q([0,T]; L_x^2(w))}
	\lesssim  T^{\frac{2(q-2)}{q(a-2)}}\|w^{\frac{\alpha q(a-2)}{4(a-q)}}\|_{\mathcal{KS}_{\alpha}}^{\frac{4(a-q)}{\alpha q(a-2)}} \|F\|_{L_t^{q\prime}([0,T];L_x^2(w^{-1}))}.
	$$
	Since $q>q'$ from $q>2$, we use the Christ-Kiselev lemma (\cite{CK}) to conclude \eqref{sinhomo}.
\end{proof}

\section{Proof of Theorem \ref{SS}}
In this section we prove Theorem \ref{SS} by making use of the weighted $L_t^qL_x^2$ estimates in the previous section.

We first consider the potential term in \eqref{SE} as a source term and then write the solution of \eqref{SE} as the sum of the solution to
the free Schr\"odinger equation plus a Duhamel term, as follows:
\begin{equation}\label{wer}
\mathcal{T}(u) = e^{it\Delta}u_0 - i \int_0^t e^{i(t-s)\Delta}(V(\cdot)u(\cdot,s))ds.
\end{equation}
Then we apply the estimates obtained in the previous section to each of the terms in \eqref{wer}
to show that $\mathcal{T}$ defines a contraction map on
\begin{align*}
X(T) = \{ u \in C_t([0,T];L_x^2) \cap &L_t^q([0,T];L_x^2 (|V|)) : \\
&\sup_{t \in [0,T]} \|u\|_{L_x^2} + \|u\|_{L_t^q ([0,T];L^2_x(|V|))}< \infty \}
\end{align*}
equipped with the distance
\begin{equation*}
d(u,v) = \sup_{t \in [0,T]}\|u-v\|_{L_x^2} + \|u-v\|_{L_t^q([0,T];L_x^2(|V|))}
\end{equation*}
for an appropriate value of $T>0$.
We shall use the notation $ |||V||| :=\|V^{\frac{\alpha q(a-2)}{4(a-q)}}\|_{\mathcal{KS}_{\alpha}}^{\frac{4(a-q)}{\alpha q(a-2)}}$
for brevity.

Let us now show that $\mathcal{T}$ is well-defined on $X$.
By applying Proposition \ref{sp} with $w=|V|$, we see that
\begin{align} \label{T1}
\nonumber\|\mathcal{T}(u)\|_{L_t^q ([0,T];L^2_x (|V|))}
&\lesssim
||| V |||^{1/2} T^{\frac{q-2}{q(a-2)}} \|u_0\|_{L^2} + ||| V ||| T^{\frac{2(q-2)}{q(a-2)}} \|u\|_{L_t^{q\prime}([0,T];L_x^2(|V|))}\\
&\lesssim
|||V|||^{1/2} T^{\frac{q-2}{q(a-2)}}\big(1+|||V|||T^{\frac{a(q-2)}{q(a-2)}}\big) \|u_0\|_{L^2}
\end{align}
where, for the second inequality, we used
\begin{equation}\label{sd1}
\|u\|_{L_t^{q\prime}([0,T]; L_x^2(|V|))} \lesssim  |||V|||^{1/2}T^{\frac{(a-1)(q-2)}{q(a-2)}} \|u_0\|_{L^2}.
\end{equation}
This follows from using H\"older's inequality in time and then applying Proposition \ref{sp} to \eqref{wer} with $w=|V|$.
Indeed,
\begin{align}\label{abso}
\nonumber\|u\|_{L_t^{q\prime}([0,T]; L_x^2(|V|))}
&\le T^{1-2/q} \|u\|_{L_t^{q}([0,T];L_x^2(|V|))}\\
\nonumber&\lesssim T^{1-2/q}\Big(|||V|||^{1/2}T^{\frac{q-2}{q(a-2)}}\|u_0\|_{L^2} \\&+|||V|||T^{\frac{2(q-2)}{q(a-2)}}\|u\|_{L_t^{q\prime}([0,T]; L_x^2(|V|))}\Big)
\end{align}
for $2<q<a<\infty$, $n-1<\alpha<n$ and $n \ge3$.
Since we are assuming that $|||V|||$ is finite and
the power of $T$ in \eqref{abso} is positive, if $T$ is assumed to be small,
the last term on the right-hand side of \eqref{abso} can be absorbed into the left-hand side.
Hence, we obtain \eqref{sd1} for small $T$.
On the other hand,
\begin{align*}
\sup_{t \in [0,T]} \|\mathcal{T}(u)\|_{L_x^2}
&\lesssim \|u_0\|_{L^2} + \sup_{t\in[0,T]} \bigg\|\int_0^t e^{-is\Delta}(V(\cdot)u(\cdot,s))ds \bigg\|_{L_x^2}\\
&\lesssim \|u_0\|_{L^2} + |||V|||^{1/2} T^{\frac{q-2}{q(a-2)}} \|u\|_{L_t^{q'}([0,T];L_x^2(|V|))}
\end{align*}
using the fact that $e^{it\Delta}$ is an isometry in $L^2$ and then applying the following dual estimate of \eqref{shomo} with $F=\chi_{[0,t]}Vu$
\begin{equation} \label{dual}
\bigg\| \int_{-\infty}^{\infty} e^{-is\Delta} F(\cdot,s)ds\bigg\|_{L_x^2} \lesssim |||V|||^{1/2} T^{\frac{q-2}{q(a-2)}} \|F\|_{L_t^{q'}([0,T];L_x^2(|V|^{-1}))}.
\end{equation}
By \eqref{sd1}, we finally get
\begin{equation} \label{T2}
\sup_{t \in [0,T]} \|\mathcal{T}(u)\|_{L_x^2} \lesssim \big(1+|||V||| T^{\frac{a(q-2)}{q(a-2)}}\big) \|u_0\|_{L^2}.
\end{equation}
Consequently, \eqref{T1} and \eqref{T2} imply $\mathcal{T}(u) \in X$ for $u \in X$.

Next, we show that $\mathcal{T}$ is a contraction on $X$.
Using \eqref{sinhomo} and H\"older's inequality in time, we see that
\begin{align} \label{cont1}
\nonumber
\|\mathcal{T}(u) - \mathcal{T}(v) \|_{L^q_t([0,T];L_x^2(|V|))}
\nonumber
&\lesssim |||V||| T^{\frac{2(q-2)}{q(a-2)}} \|u-v\|_{L^{q'}_t([0,T];L_x^2(|V|))} \\
&\lesssim |||V||| T^{\frac{2(q-2)}{q(a-2)}} T^{\frac{q-2}{q}}\|u-v\|_{L^{q}_t([0,T];L_x^2(|V|))},
\end{align}
while
\begin{align} \label{cont2}
\nonumber
\sup_{t \in [0,T]} \|\mathcal{T}(u)-\mathcal{T}(v)\|_{L^2} &\lesssim \sup_{t \in [0,T]} \bigg\|\int_0^t e^{-is\Delta}(V(\cdot)(u-v)(\cdot,s))ds \bigg\|_{L_x^2} \\
\nonumber
&\lesssim |||V|||^{1/2} T^{\frac{q-2}{q(a-2)}} \|u-v\|_{L_t^{q'}([0,T];L_x^2(|V|))} \\
&\lesssim |||V|||^{1/2} T^{\frac{q-2}{q(a-2)}}T^{\frac{q-2}{q}} \|u-v\|_{L^{q}_t([0,T];L_x^2(|V|))}
\end{align}
using the isometry $e^{it\Delta}$, \eqref{dual} and H\"older's inequalty in time.
Combining \eqref{cont1} and \eqref{cont2}, we obtain for $u,v \in X$
\begin{equation*}
d(\mathcal{T}(u),\mathcal{T}(v)) \leq C|||V|||^{1/2}T^{\frac{(q-2)(a-1)}{q(a-2)}}\big(1+|||V|||^{1/2}T^{\frac{q-2}{q(a-2)}}\big)d(u,v).
\end{equation*}
Hence $\mathcal{T}$ is a contraction on $X$ by taking $T$ sufficiently small.

Now by the contraction mapping principle, there exists a unique solution
$$u \in  C_t([0,T]; L_x^2(\mathbb{R}^n)) \,\cap\, L_t^q([0,T]; L_x^2(|V|))$$
for a small $T>0$.
Moreover, the estimate \eqref{ss11} follows immediately from \eqref{T1} and \eqref{T2}.
Thanks to
\begin{equation}\label{mass}
\sup_{t\in[0,T]}\|u(x,t)\|_{L_x^2}\lesssim\|u_0\|_{L^2},
\end{equation}
the above process can be also iterated on translated time intervals
to extend the above solution to any finite time $T$.
The estimate \eqref{mass} follows from applying the isometry of $e^{it\Delta}$, the dual estimate of \eqref{shomo}, and then \eqref{sd1}.

It remains to prove the Strichartz estimates \eqref{ss}.
We first write the solution to \eqref{SE} as before:
\begin{equation*}
u(x,t)=e^{it\Delta}u_0 -i \int_0^t e^{i(t-s)\Delta}(V(\cdot)u(\cdot, s)) ds.
\end{equation*}
By the classical Strichartz estimates \eqref{two-s} for the free Schr\"odinger flow, we then have
\begin{equation*}
\|u\|_{L_t^q([0,T]; L_x^r)} \lesssim \|u_0\|_{L^2} +\bigg\|\int_0^t e^{i(t-s)\Delta}(V(\cdot)u(\cdot, s)) ds\bigg\|_{L_t^q([0,T]; L_x^r)}
\end{equation*}
for $q\ge 2$ and $2/q +n/r = n/2$.
Now we are reduced to showing that
\begin{equation*}
\bigg\|\int_0^t e^{i(t-s)\Delta}(V(\cdot)u(\cdot, t)) ds\bigg\|_{L_t^q([0,T]; L_x^r)}\lesssim
T^{\frac{a(q-2)}{q(a-2)}} \|V^{\frac{\alpha q(a-2)}{4(a-q)}}\|_{\mathcal{KS}_\alpha}^{\frac{4(a-q)}{\alpha q(a-2)}}\|u_0\|_{L^2}
\end{equation*}
under the same conditions on $\alpha$, $a$ and $(q,r)$ as in Theorem \ref{SS}.
By duality, it suffices to prove that
\begin{equation}\label{pop}
\bigg|\bigg\langle \int_0^t e^{i(t-s)\Delta}(Vu) ds , G \bigg\rangle_{x,t}\bigg| \lesssim |||V|||T^{\frac{a(q-2)}{q(a-2)}} \|u_0\|_{L^2} \|G\|_{L_t^{q\prime}([0,T]; L_x^{r'})}.
\end{equation}
To show this, we first write
\begin{align}\label{eq1}
\bigg\langle \int_0^t e^{i(t-s)\Delta}(Vu) ds , G \bigg\rangle_{x,t} &=  \int_{\mathbb{R}}\int_{0}^{t} \Big\langle Vu \,,\, e^{-i(t-s)\Delta}G \Big\rangle_{x}\, ds\,dt \nonumber\\
&= \bigg\langle  V^{1/2} u, V^{1/2} \int_{s}^{\infty} e^{-i(t-s)\Delta}G \,dt \bigg\rangle_{x, s},
\end{align}
and then use H\"older's inequality to bound \eqref{eq1} by
\begin{equation*}
\|u\|_{L_s^{q\prime}([0,T]; L_x^2(|V|))}  \bigg\|\int_{s}^{\infty} e^{-i(t-s)\Delta}G \,dt \bigg\|_{L_s^q([0,T]; L_x^2(|V|))} .
\end{equation*}
We will then show
\begin{equation}\label{sd2}
\bigg\|\int_{t}^{\infty} e^{i(t-s)\Delta}G\, ds \bigg\|_{L_t^q([0,T]; L_x^2(|V|))}
\lesssim  |||V|||^{1/2}T^{\frac{q-2}{q(a-2)}}  \|G\|_{L_t^{q\prime}([0,T]; L_x^{r'})}.
\end{equation}
Using this estimate together with \eqref{sd1}, the desired estimate \eqref{pop} follows.

To show \eqref{sd2}, we use the weighted estimate \eqref{shomo} and the dual estimate of \eqref{two-s} to see that
\begin{align*}
\bigg\|\int_{-\infty}^{\infty} e^{i(t-s)\Delta}G \,ds \bigg\|_{L_t^q([0,T]; L_x^2(|V|))} &= \bigg\|e^{it\Delta}\int_{-\infty}^{\infty} e^{-is\Delta}G \,ds \bigg\|_{L_t^q([0,T]; L_x^2(|V|))} \nonumber\\
&\lesssim |||V|||^{1/2}T^{\frac{q-2}{q(a-2)}} \bigg\|\int_{-\infty}^{\infty} e^{-is\Delta}G \,ds \bigg\|_{L_x^2}\nonumber\\
&\lesssim|||V|||^{1/2} T^{\frac{q-2}{q(a-2)}}\|G\|_{L_t^{q\prime}([0,T]; L_x^{r'})}
\end{align*}
for $2<q<a<\infty$, $n-1<\alpha<n$ and $n \ge3$.
Since $q>q'$ from $q>2$, we then use the Christ-Kiselev lemma (\cite{CK}) to conclude
\begin{equation*}\label{sck}
\Big\|\int_{-\infty}^{t} e^{i(t-s)\Delta}G ds \Big\|_{L_t^q([0,T]; L_x^2(|V|))}
\lesssim  |||V|||^{1/2}T^{\frac{q-2}{q(a-2)}}\|G\|_{L_t^{q\prime}([0,T]; L_x^{r'})},
\end{equation*}
which implies \eqref{sd2} by invoking the splitting of the integral
$\int_t^\infty=\int_{-\infty}^\infty-\int_{-\infty}^t$.

Once the local Strichartz estimates \eqref{ss} are shown for small $T$, they hold (with a constant depending on $T$)
for any $T$ by a straightforward iteration together with \eqref{mass}.

\subsubsection*{Acknowledgement}
The authors would like to thank the anonymous referees for their valuable comments
which helped to improve the manuscript.

\end{document}